\documentclass[12pt]{amsart}
\usepackage{xcolor}
\usepackage{amsmath,amssymb,setspace,nicefrac, yhmath, amscd,eucal,dsfont}
\usepackage[active]{srcltx}
\usepackage[colorlinks, linkcolor=red, citecolor=blue, urlcolor=blue, hypertexnames=true]{hyperref}
\usepackage{enumitem}
\usepackage{amsrefs}
\usepackage{tikz-cd}
\usepackage{soul}
\allowdisplaybreaks
\usepackage[all]{xy}
\usepackage{hyperref}

\newcommand{\N}{{\mathbb N}}

\newcommand{\cU}{{\mathcal U}}

\newtheorem{thm}{Theorem}[section]
\newtheorem{cor}[thm]{Corollary}
\newtheorem{lemma}[thm]{Lemma}
\newtheorem{question}[thm]{Question}
\newtheorem{definition}[thm]{Definition}
\newtheorem{example}[thm]{Example}
\newtheorem{remark}[thm]{Remark}

\theoremstyle{definition}

\title[Numerical Index, Convergence and Ultraproducts]{On the Convergence of Numerical Index via Operator Openings and Ultraproducts}
\author[Monika]{Monika}\address{Hampton University, Hampton, VA, USA}\email{fnu.monika@hamptonu.edu}
\author[Amrutam]{Tattwamasi Amrutam}\address{Institute of Mathematics of the Polish Academy of Sciences, Ul.S'niadeckich 8, 00-656 Warszawa, Poland}\email{tattwamasiamrutam@gmail.com} \author[Dey]{Priyadarshi Dey}\address{Millsaps College, Jackson, Mississippi, USA}\email{priyadarshid4@gmail.com, deyp@millsaps.edu}

\date{\today}

\begin{document}
\begin{abstract}
The numerical index of a Banach space is a geometric constant relating the numerical radius of bounded linear operators to their standard operator norm. In this paper, we study the continuity of the numerical index under two distinct notions of subspace convergence. First, we establish a full limit theorem in the operator opening topology: if $\{X_n\}_{n \in \mathbb{N}}$ and $X$ are closed subspaces of a Banach space $Y$ with $X_n \to X$ in the operator opening, then $\lim_{n \to \infty} n(X_n) = n(X)$. Second, we develop ultraproduct methods for the numerical index, proving that the numerical radius is exactly preserved by ultraproduct operators, i.e., $v((T_n)_{\mathcal{U}}) = \lim_{\mathcal{U}} v(T_n)$. As a consequence, we show that $n(X_{\mathcal{U}}) \le n(X)$ for every ultrapower $\mathcal{U}$.
\end{abstract}
\keywords{Numerical Index, Convergence of Banach spaces, Ultraproducts, Bishop-Phelps-Bollob\'{a}s theorem}
\maketitle
\section{Introduction}
\newtheorem{thmx}{\textbf{Theorem}}
\renewcommand{\thethmx}{\Alph{thmx}}
The numerical index of a Banach space $X$, denoted $n(X)$, is a geometric constant that relates the numerical radius of bounded linear operators on $X$ to their standard operator norm. First introduced by Lumer in 1968, the numerical index has since played a central role at the intersection of Banach space geometry and operator theory. However, explicit computation of $n(X)$ remains difficult, particularly for infinite-dimensional spaces. 
 
A natural approach for addressing this difficulty is approximation: given a Banach space $X$, can we approximate it by a family of simpler subspaces $\{X_m\}$ and compute $n(X)$ via the limit of the numerical indices $n(X_m)$? This approach has been explored in the literature, most notably by Mart\'in et al.~\cite{martin2011numerical}, who obtained limsup inequalities under the existence of norm-one complemented subspaces. Subsequently, Aksoy and Lewicki~\cite{aksoy2013limit} established full limit theorems under Local and Global Characterization Conditions (LCC and GCC). Conditions such as LCC and GCC depend on the existence of norm-one projections compatible with norming functionals. While these hypotheses are satisfied in many classical settings, they exclude a broad class of Banach spaces in which such projections are difficult to construct, such as hereditarily indecomposable spaces \cite{HIspaces,LindenstraussTzafriri1977}.
 
In this paper, we propose two approaches that overcome these limitations, neither of which requires bounded projections.
 
Our first approach replaces the projection-based framework with a topological one. Rather than projecting from $X$ onto $X_m$, we ask how close $X_m$ is as a subspace of $X$, measured by the \textit{operator opening}. For closed subspaces $Y$, $Z$ of a Banach space $X$, the operator opening is defined by 
\[
r(Y,Z) = \max\{r_0(Y,Z),\, r_0(Z,Y)\},
\]
where
\[
r_0(Y,Z) = \inf\bigl\{\|C - I\| : C \in GL(X),\, C(Y) = Z\bigr\},
\]
with the convention $r_0(Y,Z)=1$ when no such invertible operator exists. The topology induced by the operator opening majorizes the topology induced by the geometric (gap) opening~\cite[Theorem~4.2(c)]{ostrovskii1994topologies}. Our first main result establishes the full continuity of the numerical index under this topology.
\begin{thmx}
\label{thm:operatorgapconvergence}
  Let $Y$ be a Banach space and let $\{X_n\}_{n \in \mathbb{N}}$ and $X$ be closed subspaces of $Y$. If $X_n \to X$ in the operator opening topology, then 
    \[
        \lim_{n \to \infty} n(X_n) = n(X).
    \]  
\end{thmx}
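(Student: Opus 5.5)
The plan is to show that the numerical index is stable under isomorphisms close to isometries, with an explicit modulus, and then apply this to $C_n|_X : X \to X_n$. By the definition of the operator opening, $r(X_n,X)\to 0$ gives, for all large $n$, operators $C_n\in GL(Y)$ with $C_n(X)=X_n$ and $\varepsilon_n:=\|C_n-I\|\to 0$. Since $\|C_n - I\|<1$, a Neumann series bounds $\|C_n^{-1}\|\le (1-\varepsilon_n)^{-1}$. Hence $A_n:=C_n|_X : X\to X_n$ is an onto isomorphism with $\|A_n\|\,\|A_n^{-1}\|\le \frac{1+\varepsilon_n}{1-\varepsilon_n}\to 1$. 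Moreover $\|A_nx - x\|\le \varepsilon_n\|x\|$, so $A_n$ is close to the identity pointwise inside $Y$. I would use this ambient closeness rather than only the Banach–Mazur distance.

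\textbf{Key lemma.} If $A:X\to Z$ is an onto isomorphism with $\|A\|\le 1+\delta$ and $\|A^{-1}\|\le 1+\delta$, then $|n(Z)-n(X)|\le \phi(\delta)$ with $\phi(\delta)\to 0$ as $\delta\to 0$. I would prove one inequality, $n(Z)\ge n(X)-\phi(\delta)$; the other follows by symmetry using $A^{-1}$.

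\textbf{Proof of the lemma.} Take $S\in L(Z)$ with $\|S\|=1$, and set $T=A^{-1}SA\in L(X)$. Then $\|T\|\ge (1+\delta)^{-2}$, so $v(T)\ge n(X)(1+\delta)^{-2}$. Pick $x\in S_X$ and $x^*\in S_{X^*}$ with $x^*(x)=1$ and $|x^*(Tx)|\ge v(T)-\eta$. Put $z=Ax/\|Ax\|$ and $z_0^*=(A^{-1})^*x^*$. Then $z_0^*(Ax)=1$, $\|z_0^*\|\le 1+\delta$ and $\|Ax\|\ge (1+\delta)^{-1}$, so the pair $(z,z_0^*/\|z_0^*\|)$ is only approximately a state: $\frac{z_0^*}{\|z_0^*\|}(z)\ge (1+\delta)^{-2}$.

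\textbf{Main obstacle.} The numerical radius is defined with exact pairs $z^*(z)=1$, so the approximate state must be corrected. I would invoke the Bishop–Phelps–Bollobás theorem: there are $\tilde z\in S_Z$ and $\tilde z^*\in S_{Z^*}$ with $\tilde z^*(\tilde z)=1$, $\|\tilde z-z\|\le \beta(\delta)$ and $\|\tilde z^*-z_0^*/\|z_0^*\|\|\le\beta(\delta)$, where $\beta(\delta)\to 0$ uniformly, independently of the space. Then
\[
|\tilde z^*(S\tilde z)|\ge \frac{|x^*(Tx)|}{\|Ax\|\,\|z_0^*\|}-2\beta(\delta)\ge (v(T)-\eta)(1+\delta)^{-2}-2\beta(\delta).
\]
Letting $\eta\to 0$ and taking the supremum gives $v(S)\ge n(X)(1+\delta)^{-4}-2\beta(\delta)$. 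This yields the lemma with $\phi(\delta)=1-(1+\delta)^{-4}+2\beta(\delta)$, using $n(X)\le 1$.

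\textbf{Conclusion.} Applying the lemma with $\delta_n=\max\{\varepsilon_n,\ \varepsilon_n/(1-\varepsilon_n)\}\to 0$ gives $|n(X_n)-n(X)|\le\phi(\delta_n)\to 0$, hence $\lim_{n\to\infty} n(X_n)=n(X)$. The crucial point to check is that the Bishop–Phelps–Bollobás modulus is universal over all Banach spaces, which it is: it depends only on how far the approximate state is from an exact one. Consequently $\phi$ does not depend on $X_n$, and no projections are needed.
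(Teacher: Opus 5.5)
Your proof is correct and follows essentially the paper's route: you conjugate norm-one operators by the near-identity isomorphism and repair the resulting approximate states with the Bishop--Phelps--Bollob\'as theorem, whose modulus is space-independent. Packaging this as a Banach--Mazur stability lemma applied to both $A_n$ and $A_n^{-1}$ also makes clear that only $\|A_n\|\,\|A_n^{-1}\|\to 1$ is used, despite your remark about exploiting ambient closeness. Consequently a single family $C_n$ with $C_n(X)=X_n$ suffices, where the paper uses both $C_n$ and $D_n$.
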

The proof proceeds in two steps. First, we establish norm bounds on the invertible operators present in the operator opening condition (Lemma~\ref{lem:bounds}). Second, we apply the Bishop-Phelps-Bollob\'{a}s theorem to approximate the norming functionals. Together, these tools allow us to prove both the inequalities $\limsup_{n\to\infty} n(X_n) \le n(X)$ and $\liminf_{n\to\infty} n(X_n) \ge n(X)$, yielding the full limit. The core advantage of the operator-opening topology is the existence of near-identity isomorphisms, which allow us to conjugate optimal operators between $X_n$ and $X$, thereby circumventing the need for projections or operator extensions.
 
While the operator opening topology provides a clean, universal framework, it is naturally stronger than the classical gap (or geometric) opening topology. A fundamental open question is whether geometric convergence in the gap topology alone is sufficient to guarantee the continuity of the numerical index. We address this question via ultraproduct methods.
 
Our second main result establishes the exact preservation of the numerical radius under the ultraproduct construction.
\begin{thmx}
\label{thm:ultraproductintro}
Let $\{X_n\}_{n \in \mathbb{N}}$ be a sequence of Banach spaces and let $\cU$ be a free ultrafilter on $\mathbb{N}$. For any uniformly bounded sequence of operators $T_n \in \mathcal{L}(X_n)$,
\[
    v\big((T_n)_\cU\big) = \lim_\cU v(T_n).
\]
\end{thmx}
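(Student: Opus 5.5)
We prove the two inequalities $v((T_n)_\cU) \ge \lim_\cU v(T_n)$ and $v((T_n)_\cU) \le \lim_\cU v(T_n)$ separately. Throughout, $T=(T_n)_\cU$ acts on $X_\cU=(X_n)_\cU$, and $M=\sup_n\|T_n\|<\infty$. We use the standard description
\[
v(T)=\sup\{|x^*(Tx)| : \|x\|=\|x^*\|=1,\ x^*(x)=1\}.
\]
We also use the fact that the numerical radius can be computed over any subset of the state pairs $\Pi(X)$ whose projection onto $S_X$ is dense. Since $\Pi(X)$ depends on the full dual, we will avoid $(X_\cU)^*$ and work only with the functionals $(x_n^*)_\cU$ coming from $\prod X_n^*$.

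\textbf{Lower bound.} Fix $\varepsilon>0$. For each $n$, choose $(x_n,x_n^*)\in\Pi(X_n)$ with $|x_n^*(T_nx_n)|>v(T_n)-\varepsilon$. Put $x=(x_n)_\cU\in S_{X_\cU}$ and let $\phi=(x_n^*)_\cU$ be the functional
\[
\phi((y_n)_\cU)=\lim_\cU x_n^*(y_n).
\]
This functional is well defined, since $\|x_n^*\|\le 1$ makes it independent of the representative. It satisfies $\|\phi\|\le 1$ and $\phi(x)=\lim_\cU x_n^*(x_n)=1$, so $\|\phi\|=1$ and $(x,\phi)\in\Pi(X_\cU)$. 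Then
\[
|\phi(Tx)|=\lim_\cU|x_n^*(T_nx_n)|\ge\lim_\cU v(T_n)-\varepsilon .
\]

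\textbf{Upper bound.} This is the main obstacle, because a general element of $(X_\cU)^*$ need not be of the form $(x_n^*)_\cU$. To get around this, we use the Bishop--Phelps--Bollob\'as property for the numerical radius, or more simply its elementary consequence, the lemma of Bonsall--Duncan type:
\[
v(S)=\inf_{\delta>0}\sup\{|y^*(Sy)| : \|y\|=\|y^*\|=1,\ \operatorname{Re}y^*(y)>1-\delta\}.
\]
This holds for every operator $S$ on every Banach space, and follows from the Bishop--Phelps--Bollob\'as theorem applied to the pair $(y,y^*)$. In particular, for each $\delta>0$ there is $\eta(\delta)\to0$ such that $|y^*(Sy)|\le v(S)+\eta(\delta)\|S\|$ whenever $\operatorname{Re}y^*(y)>1-\delta$. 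Crucially, $\eta$ is independent of the space: the Bishop--Phelps--Bollob\'as constants are universal.

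Now take $(x,\phi)\in\Pi(X_\cU)$ with $x=(x_n)_\cU$, $\|x_n\|=1$. Fix $\delta>0$. By Hahn--Banach applied to $\phi$ on the finite-dimensional span of $x$ and $Tx$, together with local reflexivity of ultraproduct duals, we will try to find $x_n^*\in S_{X_n^*}$ with
\[
\lim_\cU x_n^*(x_n)>1-\delta \quad\text{and}\quad \lim_\cU x_n^*(T_nx_n)\approx\phi(Tx)\ \text{ up to } \delta .
\]
Concretely, $(X_\cU)^*$ is an ultraproduct-like space in which $\{(x_n^*)_\cU\}$ is weak$^*$ dense in the unit ball; this is Goldstine's theorem combined with the principle of local reflexivity, as in Heinrich's work on ultraproducts. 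Weak$^*$ density lets us approximate $\phi$ on the two vectors $x$ and $Tx$. It then follows that for $\cU$-many $n$,
\[
|x_n^*(T_nx_n)|\le v(T_n)+\eta(2\delta)M .
\]
Taking the limit along $\cU$ gives $|\phi(Tx)|\le\lim_\cU v(T_n)+\eta(2\delta)M+\delta$. Letting $\delta\to0$ and taking the supremum over $(x,\phi)$ finishes the proof.

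The delicate point, which we would check carefully, is the weak$^*$ density claim. We would justify it directly: the unit ball of $\{(x_n^*)_\cU\}$ is norming for $X_\cU$, since every $(y_n)_\cU$ attains its norm on such functionals. The bipolar theorem then gives weak$^*$ density of its convex hull, which is itself of the same form.
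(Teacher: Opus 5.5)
Your proposal is correct and follows essentially the paper's route. The lower bound comes from ultraproducts of state pairs. The upper bound comes from the weak$^*$-density of $\Phi(B_{\prod_\cU X_n^*})$ in $B_{(\prod_\cU X_n)^*}$, which follows from the norming property and the bipolar theorem alone (no local reflexivity is needed), combined with a coordinatewise Bishop--Phelps--Bollob\'as estimate with space-independent constants after normalizing the approximating functionals; the tool here is the Bishop--Phelps--Bollob\'as theorem for pairs, not the space-dependent BPB property for the numerical radius.

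As an aside, the Bonsall--Duncan fact you quote at the start already gives the upper bound at once. That fact says $v$ can be computed over any $\Gamma\subseteq\Pi(X)$ whose projection onto $S_X$ is dense, and every point of $S_{\prod_\cU X_n}$ admits a state of the form $\Phi((x_n^*)_\cU)$ with $(x_n,x_n^*)\in\Pi(X_n)$.
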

A key consequence of this result is the inequality $n(\prod_\cU X_n) \le \lim_\cU n(X_n)$, and in particular $n(X_\cU) \le n(X)$ for every ultrapower. When combined with the observation that gap topology convergence $X_n \to X$ inside an ambient space $Y$ implies the ultraproduct identification $\prod_\cU X_n = X_\cU$ inside $Y_\cU$ (Remark~\ref{rem:gap_identification}), we obtain the following reduction.
\begin{thmx}
\label{thm:reductionintro}
Let $Y$ be a Banach space and let $\{X_n\}_{n \in \mathbb{N}}$ and $X$ be closed subspaces of $Y$ with $X_n \to X$ in the gap topology. Then for every free ultrafilter $\cU$,
\[
    n(X_\cU) \le \lim_\cU n(X_n).
\]
Consequently, if $n(X_\cU) = n(X)$ holds for the space $X$, then $n(X) \le \liminf_{n \to \infty} n(X_n)$.
\end{thmx}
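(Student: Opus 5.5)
We derive Theorem~\ref{thm:reductionintro} from Theorem~\ref{thm:ultraproductintro} combined with the gap identification of Remark~\ref{rem:gap_identification}. There are three steps.

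\textbf{Step 1: the ultraproduct inequality.} We first record $n(\prod_\cU X_n)\le \lim_\cU n(X_n)$. Fix $\varepsilon>0$. For each $n$ choose $T_n\in\mathcal L(X_n)$ with $\|T_n\|=1$ and $v(T_n)\le n(X_n)+\varepsilon$. This is possible since $n(X_n)=\inf\{v(T):\|T\|=1\}$. The family is uniformly bounded, so $T=(T_n)_\cU$ is a well-defined operator on $\prod_\cU X_n$ with $\|T\|=\lim_\cU\|T_n\|=1$. Here we use the standard fact that the norm of an ultraproduct operator is the $\cU$-limit of the norms: pick $x_n\in B_{X_n}$ with $\|T_nx_n\|\ge 1-1/n$. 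By Theorem~\ref{thm:ultraproductintro},
\[
n\Bigl(\prod_\cU X_n\Bigr)\le v(T)=\lim_\cU v(T_n)\le \lim_\cU n(X_n)+\varepsilon .
\]
Letting $\varepsilon\to0$ gives the inequality.

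\textbf{Step 2: gap identification.} Since $X_n\to X$ in the gap topology inside $Y$, Remark~\ref{rem:gap_identification} identifies $\prod_\cU X_n$ and $X_\cU$ as the same closed subspace of $Y_\cU$. The argument is short. Given $(x_n)_\cU$ with $x_n\in X_n$ bounded, the gap condition yields $y_n\in X$ with $\|x_n-y_n\|\le \delta_n\|x_n\|$, where $\delta_n\to0$. Hence $(x_n)_\cU=(y_n)_\cU\in X_\cU$, and the reverse inclusion holds by symmetry. The identification is isometric because both spaces carry the norm inherited from $Y_\cU$. Since the numerical index is an isometric invariant, $n(X_\cU)=n(\prod_\cU X_n)\le\lim_\cU n(X_n)$.

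\textbf{Step 3: the consequence.} Suppose $n(X_\cU)=n(X)$. This hypothesis is used for every free ultrafilter; it holds automatically, for instance, if it is known for all ultrapowers of $X$. Choose a subsequence $(n_k)$ with $n(X_{n_k})\to\liminf_n n(X_n)$, and take a free ultrafilter $\cU$ containing $\{n_k:k\in\N\}$. Then $\lim_\cU n(X_n)=\liminf_n n(X_n)$, and Step~2 gives $n(X)\le\liminf_{n\to\infty} n(X_n)$.

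The main point needing care is Step~2. There we must check that the gap convergence is measured in a way that gives the uniform approximation $\operatorname{dist}(x,X)\le\delta_n\|x\|$ for $x\in X_n$, together with its symmetric counterpart. We must also check that the norms of representing sequences stay bounded, so that the elements $(y_n)_\cU$ are legitimate members of the ultraproduct. The rest of the argument is formal once Theorem~\ref{thm:ultraproductintro} is available.
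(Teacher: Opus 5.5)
Your proposal is correct and follows the paper's route: Step~1 is Corollary~\ref{cor:index_upper}, Step~2 is Remark~\ref{rem:gap_identification}, and Step~3 is the proof of Theorem~\ref{thm:reduction}(ii), where, as you note, the hypothesis $n(X_\cU)=n(X)$ is needed for the particular ultrafilter chosen. The only difference is cosmetic: you take a free ultrafilter containing $\{n_k:k\in\N\}$, which automatically contains every tail $\{n_k:k\ge K\}$ because it contains all cofinite sets, whereas the paper builds the tails into the ultrafilter explicitly.
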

This reduces the gap topology convergence problem to the following purely intrinsic question about ultrapowers (see Question~\ref{q:ultrapower}). We note that the answer is affirmative when $X$ is lush (since lushness is preserved by ultraproducts~\cite{BKMM2009}, giving $n(X_\cU) = n(X) = 1$) and trivially when $n(X) = 0$. For general Banach spaces, the question remains open.
 
\subsection*{Acknowledgment} T.A. was partially supported by the Simons Foundation grant (award no. SFI-MPS-T-Institutes-00010825) and from State Treasury funds as part of a task commissioned by the Minister of Science and Higher Education under the project "Organization of the Simons Semesters at the Banach Center - New Energies in 2026-2028" (agreement no. MNiSW/2025/DAP/491).
\section{Preliminaries}
Given a Banach space $X$, we write $B_{X}, S_{X}$, and $X^*$ to denote the unit ball, unit sphere, and the dual space of $X$ and define
\[\Pi(X):= \{(x,x^*): x\in S_{X}, x^{*}\in S_{X^*}, x^*(x)=1\}.\]
We briefly recall the notions of numerical range, numerical radius, and numerical index and refer the readers to the classical monographs by Bonsall and Duncan \cite{BonsallDuncan1971,BonsallDuncan1973} for more details. 
\begin{definition}[Numerical range, Numerical radius, and Numerical index]
Let $X$ be a Banach space over $\mathbb{K}$ and let $T\in \mathcal{L}(X)$ be a bounded linear operator.  
The numerical range of $T$ is the set
\[
V(T)
:=\left\{x^*(Tx):\ x\in S_X,\ x^*\in S_{X^{\ast}}, x^{\ast}(x)=1\right\}.
\]Then, the numerical radius of $T$ is the supremum of absolute values of the elements in the numerical range, i.e.,
\[
v(T)
:=\sup\left\{| \lambda|:\ \lambda\in V(T)\right\}.
\]
Finally, the numerical index of $X$ is defined by taking the infimum of the numerical radius of norm one operators on $X$, that is
\[
n(X)
:=\inf\left\{v(T):\ T\in \mathcal{L}(X),\ \|T\|=1\right\}.
\]
\end{definition}
Finally, we need the following easy modification of the well-known Bishop-Phelps-Boll\'{o}bas Theorem~\cite[Theorem~1]{Bollob1970}. We record it for completeness.
\begin{lemma}\label{lem:bpb_sequence}
    Let $X$ be a Banach space. Let $\varepsilon>0$. Suppose there exists $u\in B_X$ and $u^*\in B_{X^*}$ such that 
    \[
       \text{Re }  u^*(u) = 1 - \delta,
    \]
    where $|\delta|<\frac{\varepsilon^2}{2}$. Then there exists a state pair $(y, y^*) \in \Pi(X)$ such that 
    \[
         \|u - y\| <\varepsilon \quad \text{and} \quad  \|u^* - y^*\|<\varepsilon.
    \]
\end{lemma}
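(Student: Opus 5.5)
The plan is to reduce the lemma to the Bishop--Phelps--Bollob\'as theorem \cite[Theorem~1]{Bollob1970} after a normalization. We use that theorem in its standard quantitative form: if $x\in S_X$, $x^*\in S_{X^*}$ and $\operatorname{Re} x^*(x) > 1-\eta$ with $0<\eta<\tfrac12$, then there is $(y,y^*)\in\Pi(X)$ with $\|x-y\| < \sqrt{2\eta}+\eta$ and $\|x^*-y^*\|\le\sqrt{2\eta}$. Equivalently, in the sharper form with constants $\|x-y\|<\sqrt{2\eta}$ and $\|x^*-y^*\|<\sqrt{2\eta}$ (Chica--Kadets--Mart\'in--Moreno-Pulido--Rambla-Barreno), the hypothesis $\eta<\varepsilon^2/2$ gives exactly $\|x-y\|<\varepsilon$ and $\|x^*-y^*\|<\varepsilon$. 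The statement is clearly calibrated to this form, so I would cite that sharp version.

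The first step is to deal with the fact that $u\in B_X$ and $u^*\in B_{X^*}$ need not have norm one. Note that $1-\delta=\operatorname{Re}u^*(u)\le\|u^*\|\|u\|\le 1$, so necessarily $\delta\ge 0$; the case $\delta=0$ forces $\|u\|=\|u^*\|=1$ and $u^*(u)=1$, and we take $(y,y^*)=(u,u^*)$. For $\delta>0$, the sharp Bishop--Phelps--Bollob\'as theorem is stated for the unit balls, namely: if $u\in B_X$, $u^*\in B_{X^*}$ and $\operatorname{Re}u^*(u)>1-\eta$, then there is $(y,y^*)\in\Pi(X)$ within $\sqrt{2\eta}$ of $(u,u^*)$. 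Applying it with any $\eta$ satisfying $\delta<\eta<\varepsilon^2/2$ gives $\|u-y\|<\sqrt{2\eta}<\varepsilon$ and $\|u^*-y^*\|<\sqrt{2\eta}<\varepsilon$. Such an $\eta$ exists by the strict inequality $\delta<\varepsilon^2/2$. In the complex case, $\operatorname{Re}$ is exactly what the theorem uses.

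If one prefers to quote only the sphere version, I would normalize to $\tilde u=u/\|u\|$ and $\tilde u^*=u^*/\|u^*\|$. Then $\operatorname{Re}\tilde u^*(\tilde u)\ge 1-\delta$, and $\|u-\tilde u\|=1-\|u\|\le\delta$, and likewise for $u^*$. The errors then add to $\sqrt{2\delta}+\delta$, which is only slightly above $\varepsilon$ in general. To recover the strict bound $\varepsilon$ one would have to exploit the slack between $\delta$ and $\varepsilon^2/2$, which is not available uniformly.

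The main obstacle is this constant bookkeeping. I would therefore rely on the ball version of the theorem with the constant $\sqrt{2\eta}$, citing the refined Bishop--Phelps--Bollob\'as estimates, and treat the normalization route only as a fallback.
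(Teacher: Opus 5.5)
Your argument is correct and is essentially the paper's proof: the paper likewise notes $\text{Re}\,u^*(u)>1-\varepsilon^2/2$ and applies the sharp Bishop--Phelps--Bollob\'as theorem for the closed unit ball directly to $(u,u^*)$, with no normalization. The paper cites this via the survey of Dantas et al.; it is the version you attribute to Chica et al. Your separate treatment of $\delta=0$ and the intermediate $\eta$ are harmless but unnecessary, and the sphere-normalization fallback can be dropped, since, as you note, it does not give the stated constant.
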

 
\begin{proof}
Let $\varepsilon>0$. Since  $|\delta|<\frac{\varepsilon^2}{2}$  and $\text{Re } u^*(u) = 1 - \delta$, 
    \[
        \text{Re } u^*(u) = 1 - \delta \ge 1 - |\delta|>1-\frac{\varepsilon^2}{2}.
    \]
     By the Bishop-Phelps-Boll\'{o}bas Theorem for the closed unit ball \cite[Theorem 1.2]{dantas2022bishopold}, there exists a pair $(y, y^*) \in \Pi(X)$ with $\|u - y\| < \varepsilon$ and $\|u^* - y^*\| < \varepsilon$. The claim follows.
\end{proof}
 
\subsection{Gap Topology}
\label{subsec:gaptopology}
For a Banach space $X$, let $G(X)$ denote the collection of all closed subspaces of $X$. For any $x \in X$ and subset $A \subseteq X$, recall that the distance is defined as $\text{dist}(x, A) = \inf_{a \in A} \|x - a\|$.
 
For $Y, Z \in G(X)$, the geometric (or gap) opening between $Y$ and $Z$ is defined as
\[
    Q(Y,Z) = \max \left\{ \sup_{y \in S_Y} \text{dist}(y, S_Z), \sup_{z \in S_Z} \text{dist}(z, S_Y) \right\}.
\]
For a sequence $Y_m \in G(X)$ and $Y \in G(X)$, we say that $Y_m \to Y$ in the gap topology (in the sense of Ostrovskii \cite{ostrovskii1994topologies}) if $\lim_{m \to \infty} Q(Y_m, Y) = 0$. If a sequence of subspaces $X_m \to X$ in the gap topology inside an ambient space $Y$, the condition $Q(X_m, X) \to 0$ yields two crucial approximation properties for their unit spheres that we will use extensively.
    \begin{enumerate}
        \item Because $\sup_{x \in S_X} \text{dist}(x, S_{X_m}) \to 0$, any fixed vector on the unit sphere of the limit space can be approximated by vectors from the unit spheres of the sequence. Specifically, for any fixed $x_0 \in S_X$, there exists a sequence $x_m \in S_{X_m}$ such that $\|x_m - x_0\|_Y \to 0$ as $m \to \infty$.
        \item Because $\sup_{z \in S_{X_m}} \text{dist}(z, S_X) \to 0$, any sequence of vectors picked from the unit spheres of $X_m$ eventually gets arbitrarily close to the unit sphere of $X$. Specifically, for any sequence $x_m \in S_{X_m}$, there exists a corresponding sequence $u_m \in S_X$ such that $\|x_m - u_m\|_Y \to 0$ as $m \to \infty$.
    \end{enumerate}
 
\subsection{Operator Opening Topology}    
Let $X$ be a Banach space and let $G(X)$ denote the set of closed subspaces of $X$. By $GL(X)$ we denote the group of all invertible linear operators on $X$. For $Y, Z \in G(X)$, let
$$r_0(Y, Z) = \inf\{\|C - I\| : C \in GL(X), C(Y) = Z\},$$
if the set over which the infimum is taken is not empty, and $r_0(Y, Z) = 1$ otherwise.
The \textit{operator opening} between $Y$ and $Z$ is defined by
$$r(Y, Z) = \max\{r_0(Y, Z), r_0(Z, Y)\}.$$
 
The topology induced by the operator opening majorizes the topology induced by the geometric opening~\cite[Theorem~4.2(c)]{ostrovskii1994topologies}. We show below that if $X_m\to X$ in the operator opening topology, then $\lim_{m \to \infty} n(X_m) = n(X)$.
\begin{lemma}\label{lem:bounds} Let $Y$ be a Banach space. Let $X$ and $Z$ be closed subspaces of $Y$. Suppose that there exists $C\in GL(Y)$ with $C(X)=Z$ and $\|C-I\|<\frac{\eta}{2}$ for some $0<\eta<2$. Then, 
\[\frac{1}{\|C\|\|C^{-1}\|}\ge \frac{2-\eta}{2+\eta}.\]
\end{lemma}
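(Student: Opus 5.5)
The plan is to bound $\|C\|$ from above and $\|C^{-1}\|$ from above separately, using only that $C$ is a small perturbation of the identity. The hypotheses $C(X)=Z$ and $C\in GL(Y)$ play no role beyond guaranteeing that $C^{-1}$ exists. In fact $\|C-I\|<\eta/2<1$ already gives invertibility by the Neumann series, so the estimate is purely about operators near $I$.

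First, by the triangle inequality,
\[
\|C\|\le \|I\|+\|C-I\|< 1+\frac{\eta}{2}=\frac{2+\eta}{2}.
\]
This uses $\|I\|=1$, valid whenever $Y\neq\{0\}$; the trivial space can be set aside.

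Second, write $C=I-(I-C)$ with $\|I-C\|<\eta/2<1$, since $0<\eta<2$. The Neumann series then gives $C^{-1}=\sum_{k\ge 0}(I-C)^k$, and hence
\[
\|C^{-1}\|\le \sum_{k\ge0}\|I-C\|^k=\frac{1}{1-\|I-C\|}<\frac{1}{1-\eta/2}=\frac{2}{2-\eta}.
\]
Equivalently, for every $y\in Y$ we have $\|Cy\|\ge \|y\|-\|(C-I)y\|\ge (1-\eta/2)\|y\|$. Applying this to $y=C^{-1}w$ bounds $\|C^{-1}w\|$ by $\frac{2}{2-\eta}\|w\|$.

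Multiplying the two bounds gives
\[
\|C\|\|C^{-1}\|<\frac{2+\eta}{2}\cdot\frac{2}{2-\eta}=\frac{2+\eta}{2-\eta}.
\]
Taking reciprocals, which is legitimate because all quantities are positive, yields $\frac{1}{\|C\|\|C^{-1}\|}>\frac{2-\eta}{2+\eta}$, which implies the stated non-strict inequality.

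There is no real obstacle here. The only point needing care is the bound on $\|C^{-1}\|$, which requires $\eta<2$ so that $1-\eta/2>0$ and the Neumann series (or the lower bound on $\|Cy\|$) is available.
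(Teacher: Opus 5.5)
Your proposal is correct and follows the paper's proof exactly: the triangle inequality gives $\|C\|<\frac{2+\eta}{2}$, the Neumann series gives $\|C^{-1}\|\le \frac{1}{1-\|I-C\|}\le\frac{2}{2-\eta}$, and multiplying the two bounds and taking reciprocals yields the claim. Your side remarks are accurate refinements that the paper leaves implicit: the case $Y=\{0\}$ must be excluded, and the resulting inequality is in fact strict.
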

\begin{proof}
To begin with, since $\|C-I\|<1$, the Neumann series gives $C^{-1}=\sum_{k=0}^{\infty}(I-C)^{k}$ and hence
\[\|C^{-1}\|\le \frac{1}{1-\|I-C\|}\le  \frac{2}{2-\eta}.\]
Moreover, by the triangle inequality, we get that
\[\|C\| \le \|I\|+\|C-I\| < 1+\frac{\eta}{2}=\frac{2+\eta}{2},\]
which in turn implies that
\[\|C\|\|C^{-1}\|\le \frac{2+\eta}{2}\cdot \frac{2}{2-\eta}=\frac{2+\eta}{2-\eta},\]
which is equivalent to the one claimed.
\end{proof}
\subsection{Ultraproducts of Banach Spaces}
We briefly recall the Banach space ultraproduct construction and refer the reader to Heinrich~\cite{heinrich1980} for a comprehensive treatment.
 
\begin{definition}[Banach space ultraproduct]
Let $\{X_n\}_{n \in \N}$ be a sequence of Banach spaces and let $\cU$ be a free ultrafilter on $\N$. The \textit{ultraproduct} of $\{X_n\}$ with respect to $\cU$ is the quotient $\prod_\cU X_n = \ell_\infty(\{X_n\}) / c_{0,\cU}(\{X_n\})$, where $c_{0,\cU}(\{X_n\}) = \{ (x_n) \in \ell_\infty(\{X_n\}) : \lim_\cU \|x_n\| = 0 \}$. The equivalence class of $(x_n)$ is denoted $(x_n)_\cU$, and the quotient norm satisfies $\|(x_n)_\cU\| = \lim_\cU \|x_n\|$. When all $X_n = X$, the ultraproduct is called the \textit{ultrapower} $X_\cU$, and the \textit{diagonal embedding} $\delta(x) = (x, x, \ldots)_\cU$ is an isometry.
\end{definition}
 
\begin{definition}[Ultraproduct of operators]
For $T_n \in \mathcal{L}(X_n)$ with $\sup_n \|T_n\| < \infty$, the \textit{ultraproduct operator} $(T_n)_\cU((x_n)_\cU) = (T_n x_n)_\cU$ satisfies $\|(T_n)_\cU\| = \lim_\cU \|T_n\|$.
\end{definition}
 
\begin{remark}[The dual of an ultraproduct]\label{rem:dual_ultraproduct}
The natural map $\Phi \colon \prod_\cU X_n^* \to (\prod_\cU X_n)^*$ defined by $\Phi((f_n)_\cU)((x_n)_\cU) = \lim_\cU f_n(x_n)$ is an isometric embedding. In general $\Phi$ is not surjective, but $\Phi(\prod_\cU X_n^*)$ is always a $1$-norming subspace: for every $(x_n)_\cU \in \prod_\cU X_n$,
\begin{equation}\label{eq:norming}
    \|(x_n)_\cU\| = \sup\big\{ |\lim_\cU f_n(x_n)| : (f_n)_\cU \in \prod_\cU X_n^*,\; \lim_\cU \|f_n\| \le 1 \big\}.
\end{equation}
Indeed, choosing $f_n \in S_{X_n^*}$ with $f_n(x_n) = \|x_n\|$ achieves the supremum.
\end{remark}
 
\begin{remark}\label{rem:gap_identification}
If $X_n \to X$ in gap topology inside an ambient Banach space $Y$, then $\prod_\cU X_n \cong X_\cU$ isometrically inside $Y_\cU$. For the inclusion $(\subseteq)$: let $(x_n)_{\cU} \in \prod_{\cU}X_n$ with $x_n \in X_n$. We may assume, without loss of generality that $x_n \ne 0$ for $\cU$-almost all $n$. Let $x_n=\|x_n\| \cdot \frac{x_n}{\|x_n\|}.$ For $\frac{x_n}{\|x_n\|} \in S_{X_n}$, gap convergence (property 2 from Subsection~\ref{subsec:gaptopology}) yields $v_n \in S_X$ with
\[\bigg\|\frac{x_n}{\|x_n\|}-v_n\bigg\|_{Y} \le \sup_{z \in S_{X_n}}dist(z,S_{X}) \to 0.\]
Set $u_n=\|x_n\|v_n \in X$. Since $\|u_n\|_{Y}=\|x_n\|_{Y} \le M$, and 
\[\|x_n-u_n\|_{Y}=\|x_n\|_{Y}\bigg\|\frac{x_n}{\|x_n\|}-v_n\bigg\|_{Y} \le M \cdot \sup_{z \in S_{X_n}}dist(z,S_{X}) \to 0.\]
Hence $(x_n)_\cU = (u_n)_\cU \in X_\cU$. The reverse inclusion $(\supseteq)$ follows symmetrically using property 1.
\end{remark}
 
\section{Proof of Main Results}
We begin by establishing the full limit theorem for the numerical index under the operator opening topology, which serves as the formal proof for Theorem~\ref{thm:operatorgapconvergence}. The core advantage of this topology is algebraic in that there are near-identity isomorphisms, which allow us to conjugate optimal operators on $X_n$ to valid test operators on $X$ (and vice versa). By strictly controlling the condition numbers of these conjugating maps via Lemma~\ref{lem:bounds}, we ensure that the numerical radius and the operator norm are asymptotically preserved, circumventing the need for global Hahn-Banach extensions altogether.  
\begin{thm}\label{thm:operator_opening_convergence}
    Let $Y$ be a Banach space and let $\{X_n\}_{n \in \mathbb{N}}$ and $X$ be closed subspaces of $Y$. If $X_n \to X$ in the operator opening topology, then 
    \[
        \lim_{n \to \infty} n(X_n) = n(X).
    \]
\end{thm}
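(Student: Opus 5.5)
We prove both inequalities $\limsup_{n} n(X_n)\le n(X)$ and $n(X)\le\liminf_n n(X_n)$ with a single conjugation estimate, applied once in each direction. Fix $n$ large and $\eta_n\to 0$ with $r(X_n,X)<\eta_n/2$. Choose $C_n\in GL(Y)$ with $C_n(X)=X_n$ and $\|C_n-I\|<\eta_n/2$. Put $A=C_n|_X\colon X\to X_n$, which is an onto isomorphism with inverse $C_n^{-1}|_{X_n}$. By Lemma~\ref{lem:bounds},
\[
K_n:=\|A\|\,\|A^{-1}\|\le \|C_n\|\,\|C_n^{-1}\|\le \frac{2+\eta_n}{2-\eta_n}\longrightarrow 1 .
\]
The reverse direction uses the same $C_n$: $D_n:=C_n^{-1}$ maps $X_n$ onto $X$, and $\|D_n-I\|\le \|C_n-I\|/(1-\|C_n-I\|)\to 0$. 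So only $r_0(X,X_n)\to 0$ is really needed, and it suffices to prove the following one-sided statement for a general near-identity isomorphism $A\colon E\to F$ between closed subspaces of $Y$.

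\textbf{Conjugation estimate.} Let $T\in\mathcal L(F)$ with $\|T\|=1$, and set $S=A^{-1}TA\in\mathcal L(E)$. Then $\|S\|\ge 1/K$, where $K=\|A\|\,\|A^{-1}\|$.

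To bound $v(S)$, take $(x,x^*)\in\Pi(E)$ and transport the pair:
\[
u=\frac{Ax}{\|Ax\|}\in S_F,\qquad u^*=\frac{x^*\circ A^{-1}}{\|x^*\circ A^{-1}\|}\in S_{F^*}.
\]
Since $x^*(A^{-1}Ax)=1$, we get
\[
\text{Re}\,u^*(u)=\frac{1}{\|Ax\|\,\|x^*A^{-1}\|}\ge \frac1K\ge\frac{2-\eta}{2+\eta}.
\]
Write $\text{Re}\,u^*(u)=1-\delta$ with $0\le\delta\le \frac{2\eta}{2+\eta}$, and choose $\varepsilon=\varepsilon(\eta)\to0$ with $\delta<\varepsilon^2/2$ (for instance $\varepsilon=2\sqrt{\eta}$). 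Lemma~\ref{lem:bpb_sequence} then gives $(y,y^*)\in\Pi(F)$ with $\|u-y\|<\varepsilon$ and $\|u^*-y^*\|<\varepsilon$. Because $\|T\|=1$, this yields $|u^*(Tu)|\le |y^*(Ty)|+2\varepsilon\le v(T)+2\varepsilon$. Consequently
\[
|x^*(Sx)|=\|Ax\|\,\|x^*A^{-1}\|\,|u^*(Tu)|\le K\,(v(T)+2\varepsilon).
\]
Taking the supremum over $\Pi(E)$ gives $v(S)\le K(v(T)+2\varepsilon)$.

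By homogeneity, $n(E)\le v(S)/\|S\|\le K^2\,(v(T)+2\varepsilon)$. Taking the infimum over norm-one $T$ gives
\[
n(E)\le K^2\,\bigl(n(F)+2\varepsilon\bigr).
\]
With $E=X$, $F=X_n$, $A=C_n|_X$, letting $n\to\infty$ yields $n(X)\le\liminf_n n(X_n)$. With $E=X_n$, $F=X$, $A=D_n|_{X_n}$, it yields $\limsup_n n(X_n)\le n(X)$. Together these give the theorem.

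The main obstacle is the BPB step: Lemma~\ref{lem:bpb_sequence} requires the real part of $u^*(u)$ to be close to $1$, with both $u$ and $u^*$ in the unit balls. This is exactly why we normalize the transported pair and control $\text{Re}\,u^*(u)$ from below by $1/K$, rather than by $\|A\|$ and $\|A^{-1}\|$ separately. In the complex case $u^*(u)$ is in fact real and positive, so no issue with the imaginary part arises. The remaining points are routine: checking that $\varepsilon(\eta)\to0$ uniformly over $x$ and $T$, and handling $\|S\|$ versus the normalization $\|S\|=1$ in the definition of $n(\cdot)$.
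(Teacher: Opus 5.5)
Your proposal is correct and follows essentially the paper's argument: you conjugate by the near-identity isomorphism, bound $\|S\|$ and the transported state through the condition-number estimate of Lemma~\ref{lem:bounds}, and invoke the Bishop--Phelps--Bollob\'as lemma to return to a genuine state, which gives $n(E)\le K^2\bigl(n(F)+2\varepsilon\bigr)$ in each direction. The differences are only in packaging: you prove a single estimate and apply it twice, and you take the reverse isomorphism to be $C_n^{-1}$, so only $r_0(X,X_n)\to 0$ is used, whereas the paper chooses $C_n$ and $D_n$ separately from the two halves of $r(X_n,X)$.
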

\begin{proof}
Let $1>\varepsilon > 0$. Since $X_n \to X$ in the operator opening topology, $\exists ~n_0 \in \mathbb{N}$ s.t $\forall n \ge n_0$, $r(X_n, X) < \frac{\eta}{2}$, where $\eta=\min\{\frac{\varepsilon}{2}, \frac{\varepsilon^2}{2}\}$. This in turn implies that $r_0(X_n, X) < \frac{\eta}{2}$ and $r_0(X, X_n) < \frac{\eta}{2}$. Since $\varepsilon < 1$, there exists $ C_n, D_n \in GL(Y)$ such that 
\[C_n(X_n)=X, \quad  \|C_n-I\|<\frac{\eta}{2}, \quad D_n(X)=X_n, \quad \|D_n-I\|<\frac{\eta}{2}.\]
\textbf{Claim~1:} $\limsup_{n \to \infty} n(X_n) \le n(X)$. 
 
\noindent Choose an operator $T \in \mathcal{L}(X)$ with $\|T\| = 1$ such that $v(T) < n(X) + \varepsilon$. For each $n$, define the operator $T_n \in \mathcal{L}(X_n)$ by 
    \[
        T_n := C_n^{-1} \circ T \circ C_n\big|_{X_n}.
    \]
Since $T=C_n\circ T_n\circ C_n^{-1}$, we see that $1=\|T\|\le \|C_n\|\|T_n\|\|C_n^{-1}\|$ which together with Lemma~\ref{lem:bounds} implies that
\begin{equation}
\label{eqn:boundforinverse}
\|T_n\|\ge \frac{1}{\|C_n\|\|C_n^{-1}\|}\ge\frac{2-\eta}{2+\eta}.\end{equation}
Let $(x_n, x_n^*) \in \Pi(X_n)$ be arbitrary. Define
\[x := C_n x_n \in X \quad \text{and}  \quad x^* := x_n^* \circ C_n^{-1} \in X^*.\] Observe that $x^*(x) = x_n^*(C_n^{-1} C_n x_n) = x_n^*(x_n) = 1$. Also, \begin{equation}
\label{eqn:originalevaluation}
x_n^*(T_n x_n) = x_n^*(C_n^{-1} T C_n x_n) = x^*(Tx).\end{equation}
Observe that
\[
    \|x\| \le \|C_n\| \|x_n\| \le \frac{2+\eta}{2} \quad \text{and} \quad \|x^*\| \le \|x_n^*\| \|C_n^{-1}\| \le \frac{2}{2-\eta}.
\]
Let $u := \frac{x}{\|x\|} \in S_X$ and $u^* := \frac{x^*}{\|x^*\|} \in S_{X^*}$. Then
\[
    u^*(u) = \frac{x^*(x)}{\|x\|\|x^*\|} = \frac{1}{\|x\|\|x^*\|} \ge \frac{1}{\big(\frac{2+\eta}{2}\big)\big(\frac{2}{2-\eta}\big)} = \frac{2-\eta}{2+\eta}>1-\frac{\varepsilon^2}{2}.
\]
Thus, $u^*(u) = 1 - \delta$ where $\delta < \frac{\varepsilon^2}{2}$. By Lemma~\ref{lem:bpb_sequence}, there exists a state $(y, y^*) \in \Pi(X)$ such that $\|u - y\| < \varepsilon$ and $\|u^* - y^*\| < \varepsilon$. 
Consequently,
\begin{align*}
    |u^*(Tu)| &\le |y^*(Ty)| + \|u^*\|\|T\|\|u - y\| + \|u^* - y^*\|\|T\|\|y\| \\
              &<  v(T) + 2\varepsilon.
\end{align*}
Relating this back to equation~\eqref{eqn:originalevaluation}, we see that
\[
    |x_n^*(T_n x_n)| = |x^*(Tx)| = \|x\|\|x^*\| |u^*(Tu)| \le \frac{2+\eta}{2-\eta} \big( v(T) + 2\varepsilon \big).
\]
Since $(x_n, x_n^*) \in \Pi(X_n)$ was arbitrary, taking the supremum on both sides gives \[v(T_n) \le \frac{2+\eta}{2-\eta} \big( v(T) + 2\varepsilon \big).\]
Since $n(X_n) \le \frac{v(T_n)}{\|T_n\|}$, using \eqref{eqn:boundforinverse} yields
\begin{align*}
n(X_n) &\le \frac{v(T_n)}{\|T_n\|} \\&\le  \frac{2+\eta}{2-\eta}\cdot\frac{1}{\|T_n\|} \big( v(T) + 2\varepsilon \big)\\&\le  \left(\frac{2+\eta}{2-\eta}\right)^2 \big( v(T) + 2\varepsilon \big)\\&\le \left(\frac{2+\eta}{2-\eta}\right)^2(n(X)+3\varepsilon).
\end{align*}
Taking the limit superior as $n \to \infty$ on both sides, we get that
\[
    \limsup_{n \to \infty} n(X_n) \le \left(\frac{2+\eta}{2-\eta}\right)^2( n(X) + 3\varepsilon).
\]
Letting $\varepsilon \to 0$ gives us that $\limsup_{n \to \infty} n(X_n) \le n(X)$.
 
\medskip
\noindent\textbf{Claim~2:} $\liminf_{n \to \infty} n(X_n) \ge n(X)$.
 
\noindent For each $n \ge n_0$, choose an operator $T_n \in \mathcal{L}(X_n)$ with $\|T_n\| = 1$ such that $v(T_n) < n(X_n) + \varepsilon$. 
Define $T \in \mathcal{L}(X)$ by
\[
    T := D_n^{-1} \circ T_n \circ D_n\big|_X.
\]
Using Lemma~\ref{lem:bounds}, as before, 
\[\|T\| \ge \frac{\|T_n\|}{\|D_n\| \|D_n^{-1}\|} \ge \frac{2-\varepsilon}{2+\varepsilon}.\]
Let $(x, x^*) \in \Pi(X)$ be an arbitrary state. Define $x_n := D_n x \in X_n$ and $x_n^* := x^* \circ D_n^{-1} \in X_n^*$. 
We have $x_n^*(x_n) = 1$ and $x^*(Tx) = x_n^*(T_n x_n)$. 
Observe that \[\|x_n\| \le \|D_n\|\le \frac{2+\eta}{2} \quad \text{and}\quad  \|x_n^*\| \le \frac{2}{2-\eta}.\] Letting $u_n := \frac{x_n}{\|x_n\|}$ and $u_n^* = \frac{x_n^*}{\|x_n^*\|}$, we see that  
$$u_n^*(u_n) \ge \frac{2-\eta}{2+\eta} = 1 - \frac{2\eta}{2+\eta}>1-\frac{\varepsilon^2}{2}.$$ 
Using Lemma~\ref{lem:bpb_sequence}, there exists $(y_n, y_n^*) \in \Pi(X_n)$ such that $\|u_n - y_n\| < \varepsilon$ and $\|u_n^* - y_n^*\| < \varepsilon$. Therefore,
\[
    |u_n^*(T_n u_n)| \le |y_n^*(T_n y_n)| + 2\varepsilon\le v(T_n) + 2\varepsilon.
\]
Therefore, we can bound our initial evaluation:
\[
    |x^*(Tx)| = \|x_n\|\|x_n^*\| |u_n^*(T_n u_n)| \le \frac{2+\eta}{2-\eta} \big( v(T_n) + 2\varepsilon\big).
\]
Taking the supremum over all $(x, x^*) \in \Pi(X)$ gives $v(T) \le \frac{2+\varepsilon}{2-\varepsilon} \big( v(T_n) + 2\varepsilon \big)$. 
Since $n(X) \le \frac{v(T)}{\|T\|}$, we get that
\[
    n(X) \bigg(\frac{2-\varepsilon}{2+\varepsilon}\bigg) \le v(T) \le \frac{2+\varepsilon}{2-\varepsilon} \big( n(X_n) + 3\varepsilon \big).
\]
Consequently,
\[
    n(X) \bigg(\frac{2-\varepsilon}{2+\varepsilon}\bigg)^2 - 3\varepsilon \le n(X_n).
\]
Taking the limit inferior as $n \to \infty$ gives us that
\[
\liminf_{n \to \infty} n(X_n)\ge n(X) \bigg(\frac{2-\varepsilon}{2+\varepsilon}\bigg)^2 - 3\varepsilon.
\]
Since $\varepsilon \in (0,1)$ was arbitrary, letting $\varepsilon \to 0$ gives $n(X) \le \liminf_{n \to \infty} n(X_n)$. The claim follows. Combining Claim~1 and Claim~2 yields 
\[\lim_{n \to \infty}n(X_n)=n(X).\]
\end{proof}  
It is important to emphasize that the families of subspaces considered in \cite[Theorem~5.1]{martin2011numerical} do not, in general, converge in the operator opening topology, as the following example demonstrates. 
\begin{example}
    Let $Z=\ell_2$ and for each $m \in \mathbb{N}$ let 
    \[Z_m:=\text{span}\;\{e_1,e_2, \hdots, e_m\},\]
    where $(e_k)_{k \ge 1}$ is the standard orthonormal basis of $\ell_2$. Then $(Z_m)_{m \in \mathbb{N}}$ satisfies that $\overline{\bigcup_{m\in \mathbb{N}}Z_m}=\ell_2.$
    Moreover, there exists a projection $P_m:\ell_2 \to Z_m$, and therefore, each $Z_m$ is $1$-complemented. However, $(Z_m)_{m \in \mathbb{N}}$ does not converge to $\ell_2$ in operator opening topology. Suppose, by way of contradiction, $Z_m \to Z$ in operator opening topology. Then there exists an invertible operator $C \in GL(\ell_2)$ such that $C(Z_m)=\ell_2$. Thus
    \[Z_m=C^{-1}(C(Z_m))=C^{-1}(\ell_2)=\ell_2,\] which is a contradiction to the fact that $Z_m$ is finite dimensional and proper subspace of $\ell_2$. Therefore the set $\{C \in GL(\ell_2): C(Z_m)=\ell_2\}$ is empty, and hence by definition $r_0(Z_m, \ell_2)=1$. Therefore, $Z_m \nrightarrow \ell_2$ in operator opening topology.
\end{example}
It is worth noting that the topologies induced on $G(X)$ by the geometric and operator openings are generally different. As demonstrated in \cite{gurariigeometric}, if $K$ is a complemented subspace of $Y$ and $L$ is an isomorphic uncomplemented subspace of $Z$, the geometric and operator opening topologies induced on $G(Y \oplus Z)$ do not coincide.
 
While the operator opening topology yields clean convergence results, it is too restrictive for many natural approximation schemes. The weaker gap topology is more widely applicable, but the absence of near-identity isomorphisms prevents direct operator transport between subspaces. To bridge this gap, we now develop an ultraproduct approach. The key insight is that gap topology convergence $X_n \to X$ induces a canonical identification at the ultraproduct level (Remark~\ref{rem:gap_identification}), allowing us to compare numerical indices without explicitly transporting operators. The technical engine of this approach is the following theorem, which establishes that the numerical radius behaves well under ultraproducts.
 
\begin{thm}\label{thm:numerical_radius_ultraproduct}
Let $\{X_n\}_{n \in \N}$ be Banach spaces, $\cU$ a free ultrafilter on $\N$, and $\{T_n\}$ a uniformly bounded family with $T_n \in \mathcal{L}(X_n)$. Then \[v((T_n)_\cU) = \lim_\cU v(T_n).\]
\end{thm}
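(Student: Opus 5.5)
We prove the two inequalities $v((T_n)_\cU)\ge \lim_\cU v(T_n)$ and $v((T_n)_\cU)\le \lim_\cU v(T_n)$ separately. Put $T=(T_n)_\cU$ and $M=\sup_n\|T_n\|<\infty$. The lower bound is the easy direction and uses the embedding $\Phi$ of Remark~\ref{rem:dual_ultraproduct}. The upper bound is the real content, since $\Pi(\prod_\cU X_n)$ may contain functionals outside $\Phi(\prod_\cU X_n^*)$. To handle this we do not use the Bishop--Phelps--Bollob\'as lemma at the ultraproduct level. Instead we use a description of the numerical radius that involves only norms.

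\textbf{Lower bound.} Let $\lambda=\lim_\cU v(T_n)$. For each $n$ pick $(x_n,x_n^*)\in\Pi(X_n)$ with $|x_n^*(T_nx_n)|>v(T_n)-1/n$. Then $x=(x_n)_\cU\in S_{\prod_\cU X_n}$. Set $x^*=\Phi((x_n^*)_\cU)$; since $\Phi$ is an isometry, $\|x^*\|=1$, and $x^*(x)=\lim_\cU x_n^*(x_n)=1$. Hence $(x,x^*)\in\Pi(\prod_\cU X_n)$, and $|x^*(Tx)|=\lim_\cU|x_n^*(T_nx_n)|\ge\lambda$.

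\textbf{Upper bound.} We use the classical formula (Bonsall--Duncan, Lumer) expressing the numerical radius through the norm only. For $\mathbb K=\mathbb R$,
\[
\sup \operatorname{Re}V(S)=\lim_{t\to0^+}\frac{\|I+tS\|-1}{t}=\inf_{t>0}\frac{\|I+tS\|-1}{t},
\]
and $v(S)=\max\{\sup\operatorname{Re}V(S),\sup\operatorname{Re}V(-S)\}$. In the complex case we apply this to $e^{i\theta}S$ and take the supremum over $\theta$. The infimum form holds because $t\mapsto\|I+tS\|$ is convex. Two facts drive the argument:
\begin{enumerate}
\item the ultraproduct preserves the norm of $I+tS$ for every operator, $\|(I+tT_n)_\cU\|=\lim_\cU\|I+tT_n\|$;
\item the quantity $\|I+tS\|-1-t\sup\operatorname{Re}V(S)$ is at most $\omega(t)$, where $\omega(t)\to0$ as $t\to0^+$ and $\omega(t)=o(t)$ uniformly over all Banach spaces and all $\|S\|\le M$.
\end{enumerate}
Fact 2 follows from the exponential formula $\sup\operatorname{Re}V(S)=\lim_{t\to 0^+}\frac{1}{t}\log\|e^{tS}\|$ together with $\|e^{tS}\|\le e^{t\sup\operatorname{Re}V(S)}$. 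Indeed,
\[
\|I+tS\|\le\|e^{tS}\|+\|e^{tS}-I-tS\|\le e^{t\,\sup\operatorname{Re}V(S)}+e^{tM}-1-tM,
\]
and the right-hand side is $1+t\sup\operatorname{Re}V(S)+O(t^2)$, with the $O(t^2)$ term depending only on $M$.

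Combining the two facts, for each fixed $t>0$ we get
\[
\sup\operatorname{Re}V(T)\le\frac{\|I+tT\|-1}{t}=\lim_\cU\frac{\|I+tT_n\|-1}{t}\le\lim_\cU\sup\operatorname{Re}V(T_n)+\frac{\omega(t)}{t}.
\]
Letting $t\to0^+$ gives $\sup\operatorname{Re}V(T)\le\lim_\cU\sup\operatorname{Re}V(T_n)$. We apply the same argument to $e^{i\theta}T$ for each $\theta$. The complex case is handled by a compactness argument in $\theta$: the functions $\theta\mapsto\sup\operatorname{Re}V(e^{i\theta}T_n)$ are $M$-Lipschitz, so taking the supremum over $\theta$ commutes with $\lim_\cU$ up to an arbitrarily small error. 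This yields $v(T)\le\lim_\cU v(T_n)$.

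\textbf{Main obstacle.} The hardest step is fact 2, the uniform quadratic error estimate. It must be checked carefully using the exponential-bound form of Lumer's theorem, $\|e^{tS}\|\le e^{t\sup\operatorname{Re}V(S)}$. If that route proved delicate, the fallback is the approach I judge riskier: approximate a general state $x^*$ of the ultraproduct by elements of $\Phi(\prod_\cU X_n^*)$, using that this subspace is $1$-norming together with a weak$^*$ density/Goldstine argument, and then apply Lemma~\ref{lem:bpb_sequence} coordinatewise.
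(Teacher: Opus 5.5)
Your proposal is correct. Your lower bound is the same as the paper's: near-optimal states in each coordinate, glued together via $\Phi$. For the upper bound you take a genuinely different route.

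The paper treats an arbitrary state $(\mathbf{x},\mathbf{g})\in\Pi(\prod_\cU X_n)$ directly. It uses the weak$^*$-density of $\Phi(B_{\prod_\cU X_n^*})$ in the dual ball to find $(g_n)_\cU$ that nearly agrees with $\mathbf{g}$ at both $\mathbf{x}$ and $T\mathbf{x}$. It then normalizes and applies the Bishop--Phelps--Bollob\'as lemma in each coordinate, which is essentially your fallback.

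Your argument never touches the dual of the ultraproduct. On the ultraproduct you use only the trivial inequality $\operatorname{Re}\mathbf{g}(T\mathbf{x})\le(\|I+tT\|-1)/t$, valid for every state and every $t>0$. You then use the exact identity $\|I+tT\|=\lim_\cU\|I+tT_n\|$. On the coordinates you use the uniform remainder $\|I+tS\|\le 1+t\,w(S)+O_M(t^2)$, where $w(S)=\sup\operatorname{Re}V(S)$.

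Both proofs rest on an estimate that is uniform over all Banach spaces: the universal BPB constants in the paper, your $\omega(t)$ in yours. The paper's version stays inside the state/BPB machinery it already uses for the operator-opening theorem. Yours explains why the result holds: $v$ is determined by the norms $\|I+tS\|$, which ultraproducts preserve exactly. It also needs no description of $(\prod_\cU X_n)^*$.

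Two minor points.
\begin{enumerate}
\item The Lipschitz/compactness argument in $\theta$ is unnecessary, because $\sup_\theta\lim_\cU f_n(\theta)\le\lim_\cU\sup_\theta f_n(\theta)$ holds trivially.
\item Your fact~2, which you flagged as the main obstacle, does not need exponentials. For a norming $x^*$, $\|(I-tS)x\|\ge\operatorname{Re}x^*((I-tS)x)\ge(1-t\,w(S))\|x\|$. This gives $\|(I-tS)^{-1}\|\le(1-t\,w(S))^{-1}$ for $0<t<1/M$. The factorization $I+tS=(I-tS)^{-1}(I-t^2S^2)$ then yields $\|I+tS\|-1-t\,w(S)\le 2M^2t^2/(1-tM)$.
\end{enumerate}
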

\begin{proof}
Set $T = (T_n)_\cU$ and $Z = \prod_\cU X_n$ and $M=\sup_{n}\|T_n\|< \infty$. We use throughout that if $\lim_{\cU}a_n=a$ and $a>c$, then $a_n>c$ for $\cU$-almost all $n$, and that representatives of elements of $\prod_{\cU}X_n$ may be modified on a $\cU$-null set without changing the equivalence class. 

\noindent We first establish $v(T) \ge \lim_\cU v(T_n)$. Let $\varepsilon > 0$. For each $n$, choose $(x_n, x_n^*) \in \Pi(X_n)$ with $|x_n^*(T_n x_n)| > v(T_n) - \varepsilon$. Set $\mathbf{x} = (x_n)_\cU \in S_Z$ and $\mathbf{f}=\Phi((x_n^*)_\cU) \in {Z^*}$. Since $\|x_n^*\|=1$ for all $n$, we have $\|\mathbf{f}\|\le 1$. Also $\mathbf{f(x)}=\lim_{\cU}x_n^*(x_n)=1$ forces $\|\mathbf{f}\|=1$, so $(\mathbf{x},\mathbf{f}) \in \Pi(Z)$.  Since $|\cdot|$ is continuous and ultralimits commute with the continuous functions on bounded sets, 
  \[  v(T) \ge |\mathbf{f}(T\mathbf{x})| = |\lim_\cU x_n^*(T_n x_n)| = \lim_\cU |x_n^*(T_n x_n)| \ge \lim_\cU v(T_n) - \varepsilon,
\]
Letting $\varepsilon \to 0$ gives $v(T) \ge \lim_\cU v(T_n)$.
 
We now prove $v(T) \le \lim_\cU v(T_n)$. Let $\varepsilon \in (0,1)$ and $(\mathbf{x}, \mathbf{g}) \in \Pi(Z)$ with $\mathbf{x}=(x_n)_{\cU}$. Since $\lim_{\cU}\|x_n\|=1$, replacing $x_n$ by $\frac{x_n}{\|x_n\|}$ on the set $\{n: x_n \ne 0\} \in \cU$ does not change the equivalence class, so we may assume that $\|x_n\| = 1$ for all $n$. By equation~\eqref{eq:norming}, $\Phi(\prod_\cU X_n^*)$ is $1$-norming for $Z$, and the bipolar theorem gives $\overline{\Phi(B_{\prod_\cU X_n^*})}^{w^*} = B_{Z^*}$ (see~\cite[Section~1]{heinrich1980}). So there exists $(g_n)_{\cU} \in \prod_{\cU}X_n^*$ with $\lim_{\cU}\|g_n\|\le 1$ such that  
\begin{equation}\label{eq:approx_value}
    |\lim_\cU g_n(x_n) - 1| < \frac{\varepsilon^2}{4} \quad \text{and} \quad |\Phi((g_n)_\cU)(T\mathbf{x}) - \mathbf{g}(T\mathbf{x})| < \varepsilon.
\end{equation}
 
From the first inequality in ~\eqref{eq:approx_value}, $\lim_\cU \text{Re}\, g_n(x_n) > 1 - \frac{\varepsilon^2}{4}$, and since $\text{Re}\, g_n(x_n) \le \|g_n\|$ pointwise, we get $\lim_{\cU}\|g_n\|>0$. Hence $\|g_n\|>0$ for $\cU$-almost all $n$, and set $\hat{g}_n = g_n/\|g_n\| \in S_{X_n^*}$. Since $\lim_{\cU} \|g_n\|>0$, we have
\[
    \lim_\cU \text{Re}\, \hat{g}_n(x_n) = \frac{\lim_\cU \text{Re}\, g_n(x_n)}{\lim_\cU \|g_n\|} \ge \frac{1 - \frac{\varepsilon^2}{4}}{1} > 1 - \frac{\varepsilon^2}{2}.
\]
Therefore $\text{Re}\, \hat{g}_n(x_n) > 1 - \frac{\varepsilon^2}{2}$ for $\cU$-almost all $n$. Lemma~\ref{lem:bpb_sequence} then yields $(y_n, y_n^*) \in \Pi(X_n)$ with 
\begin{equation}\label{BPBforyn}
\|x_n - y_n\| < \varepsilon \quad \text{and} \quad \|\hat{g}_n - y_n^*\| < \varepsilon \quad \text{for} \; \cU\;\text{-almost all}\; n.
\end{equation}
Writing $\hat{g}_n(T_n x_n)= (\hat{g}_n-y_n^*)(T_nx_n)+y_n^*(T_n(x_n-y_n))+y_n^*(T_ny_n)$ and using $\|x_n\|=\|y_n^*\|=1, \|T_n\| \le M, |y_n^*(T_ny_n)| \le v(T_n)$, and ~\eqref{BPBforyn},
\[
    |\hat{g}_n(T_n x_n)| \le M\|\hat{g_n}-y_n^*\|+M\|x_n-y_n\|+v(T_n)<2M\varepsilon+v(T_n).
\]
Since $\|g_n\| \le 1 + \varepsilon$ for $\cU$-almost all $n$, we obtain 
$$|g_n(T_n x_n)| = \|g_n\||\hat{g_n}(T_nx_n)|\le (1 + \varepsilon)(v(T_n) + 2M\varepsilon) \quad \text{for}\, \cU\text{-almost all}\, n.$$ 
Taking ultralimits and using the continuity of $|\cdot|$, 
\[\lim_{\cU}|g_n(T_nx_n)| \le (1+\varepsilon)(\lim_{\cU}v(T_n)+2M\varepsilon).\]
Since $\Phi((g_n)_{\cU})(T\mathbf{x})=\lim_{\cU}g_n(T_nx_n)$, the triangle inequality and ~\eqref{eq:approx_value} give
\[|\mathbf{g}(T\mathbf{x})| \le \varepsilon + \lim_{\cU}|g_n(T_nx_n)|\le \varepsilon + (1+\varepsilon)\left(\lim_{\cU}v(T_n)+2M\varepsilon\right).\]
Since $(\mathbf{x}, \mathbf{g}) \in \Pi(Z)$ was arbitrary, $v(T) \le \varepsilon + (1+\varepsilon)\left(\lim_{\cU}v(T_n)+2M\varepsilon\right)$, and letting $\varepsilon \to 0$, we conclude $v(T) \le \lim_\cU v(T_n)$.
\end{proof}
 
The exact preservation of the numerical radius under ultraproducts has immediate consequences for the numerical index. Since the numerical index is defined as an infimum of numerical radii over operators of norm one, and the ultraproduct operator norm satisfies $\|(T_n)_\cU\| = \lim_\cU \|T_n\|$, we obtain a one-sided bound.
 
\begin{cor}\label{cor:index_upper}
 For any Banach spaces $\{X_n\}$ and free ultrafilter $\cU$,
 \[n\bigg(\prod_\cU X_n\bigg) \le \lim_\cU n(X_n).\]
\end{cor}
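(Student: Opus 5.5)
The plan is to test the infimum defining $n(\prod_\cU X_n)$ against a single well-chosen ultraproduct operator and then invoke Theorem~\ref{thm:numerical_radius_ultraproduct}. Set $Z=\prod_\cU X_n$ and $a=\lim_\cU n(X_n)$; this limit exists because $n(X_n)\in[0,1]$. Fix $\varepsilon>0$. For each $n$, choose $T_n\in\mathcal{L}(X_n)$ with $\|T_n\|=1$ and $v(T_n)<n(X_n)+\varepsilon$, which is possible by the definition of $n(X_n)$ as an infimum. The family $\{T_n\}$ is uniformly bounded by $1$, so the ultraproduct operator $T=(T_n)_\cU\in\mathcal{L}(Z)$ is well defined.

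The first step is to check that $T$ is admissible in the infimum defining $n(Z)$, i.e.\ that $\|T\|=1$. This follows from the norm formula recalled in the preliminaries: $\|T\|=\lim_\cU\|T_n\|=1$. If one prefers not to rely on that stated formula, the two inequalities can be verified directly. The bound $\|T\|\le 1$ holds because $\|T_nx_n\|\le\|x_n\|$ and we can pass to the ultralimit. For $\|T\|\ge 1$, pick unit vectors $x_n$ with $\|T_nx_n\|>1-\varepsilon$; then $(x_n)_\cU$ is a unit vector whose image under $T$ has norm at least $1-\varepsilon$.

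The second step is to compute the numerical radius of $T$. By Theorem~\ref{thm:numerical_radius_ultraproduct},
\[
v(T)=\lim_\cU v(T_n)\le \lim_\cU\big(n(X_n)+\varepsilon\big)=a+\varepsilon,
\]
where the inequality uses that ultralimits preserve pointwise inequalities between bounded real sequences. Combining the two steps gives $n(Z)\le v(T)\le a+\varepsilon$, and letting $\varepsilon\to0$ yields the claim.

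I do not expect any serious obstacle here, since the substantive work, namely the upper bound $v(T)\le\lim_\cU v(T_n)$, was already done in Theorem~\ref{thm:numerical_radius_ultraproduct}. The only point that needs care is normalisation. We must make sure the chosen $T_n$ have norm exactly $1$ so that $T$ is itself norm one; otherwise we would have to rescale $T$ by $1/\|T\|$, which is harmless as long as $\|T\|>0$.
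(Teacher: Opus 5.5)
Your proposal is correct and follows essentially the same route as the paper: choose norm-one $T_n$ with nearly minimal numerical radius, note that $\|(T_n)_\cU\|=\lim_\cU\|T_n\|=1$, and apply Theorem~\ref{thm:numerical_radius_ultraproduct}. The only cosmetic difference is that the paper takes the slack to be $1/n$ instead of a fixed $\varepsilon$, so the freeness of $\cU$ gives $\lim_\cU 1/n=0$ and the final limit $\varepsilon\to0$ is not needed.
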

\begin{proof}
For each $n$, choose $T_n \in \mathcal{L}(X_n)$ with $\|T_n\| = 1$ and $v(T_n) < n(X_n) + 1/n$. Since $\|(T_n)_{\cU}\|=\lim_{\cU}\|T_n\|=1$, Theorem~\ref{thm:numerical_radius_ultraproduct} gives
\[n\left(\prod_\cU X_n\right) \le v((T_n)_{\cU})= \lim_{\cU}v(T_n) \le \lim_{\cU} \left(n(X_n)+\frac{1}{n}\right)=\lim_{\cU}n(X_n). \]
\end{proof}
 
Specializing to the case where all spaces coincide, we obtain a bound for ultrapowers.
 
\begin{cor}\label{cor:ultrapower}
For any Banach space $X$ and free ultrafilter $\cU$,
\[n(X_\cU) \le n(X).\]
\end{cor}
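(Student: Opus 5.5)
This is the special case of Corollary~\ref{cor:index_upper} in which every factor is the same space, $X_n = X$ for all $n \in \N$. In that case $\prod_\cU X_n$ is by definition the ultrapower $X_\cU$, and the sequence $n(X_n)$ is the constant sequence $n(X)$. The ultralimit of a constant bounded sequence along any free ultrafilter equals that constant, so $\lim_\cU n(X_n) = n(X)$. Substituting into the corollary gives $n(X_\cU) \le n(X)$ immediately.

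For readers who prefer a self-contained argument, I would also give the direct version, which avoids the sequence of near-optimal operators. Fix $\varepsilon>0$ and choose $T \in \mathcal{L}(X)$ with $\|T\| = 1$ and $v(T) < n(X) + \varepsilon$. Take the constant family $T_n = T$. This family is uniformly bounded, and the ultrapower operator $T_\cU := (T)_\cU$ satisfies $\|T_\cU\| = \lim_\cU \|T\| = 1$. Theorem~\ref{thm:numerical_radius_ultraproduct} then gives $v(T_\cU) = \lim_\cU v(T) = v(T)$. Therefore
\[
n(X_\cU) \le v(T_\cU) = v(T) < n(X) + \varepsilon,
\]
and letting $\varepsilon \to 0$ yields the claim.

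I do not expect any step to be an obstacle, since all the analytic content sits in Theorem~\ref{thm:numerical_radius_ultraproduct}. The only points to check are that the ultrapower operator of a norm-one operator again has norm one, and that a constant sequence has the obvious ultralimit. Both follow directly from the definitions recalled in the preliminaries.
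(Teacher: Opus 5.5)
Your proposal is correct and is exactly the paper's argument: the paper's proof is the single line applying Corollary~\ref{cor:index_upper} with $X_n = X$ for all $n$, so that $\prod_\cU X_n = X_\cU$ and $\lim_\cU n(X_n) = n(X)$. Your ``direct version'' with the constant family $T_n = T$ is the same reasoning unpacked through Theorem~\ref{thm:numerical_radius_ultraproduct}, is equally valid, and still relies on that theorem's nontrivial inequality $v(T_\cU) \le v(T)$.
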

\begin{proof}
Apply Corollary~\ref{cor:index_upper} with $X_n = X$ for all $n$.
\end{proof}
We now connect these ultraproduct bounds to the gap topology convergence problem. The key observation is that gap convergence $X_n \to X$ inside an ambient space $Y$ induces the identification $\prod_\cU X_n \cong X_\cU$ inside $Y_\cU$ (Remark~\ref{rem:gap_identification}). This allows us to reduce the gap topology convergence question to a purely intrinsic property of ultrapowers.
 
\begin{thm}\label{thm:reduction}
If $X_n \to X$ in gap topology inside $Y$. Then
\begin{enumerate}[label=(\roman*)]
    \item $n(X_\cU) \le \lim_\cU n(X_n)$ for every free ultrafilter $\cU$.
    \item If $n(X_\cU) = n(X)$, for every free ultrafilter $\cU$, then $n(X) \le \displaystyle\liminf_{n \to \infty} n(X_n)$.
\end{enumerate}

\end{thm}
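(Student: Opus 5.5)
For part (i), the plan is to combine Remark~\ref{rem:gap_identification} with Corollary~\ref{cor:index_upper}. Fix a free ultrafilter $\cU$. Since $X_n \to X$ in the gap topology inside $Y$, Remark~\ref{rem:gap_identification} gives $\prod_\cU X_n = X_\cU$ as closed subspaces of $Y_\cU$. Both carry the norm inherited from $Y_\cU$, namely $\|(x_n)_\cU\| = \lim_\cU \|x_n\|_Y$, so the identification is an isometric isomorphism. The numerical index is an isometric invariant: an isometry $J$ transports operators $T \mapsto JTJ^{-1}$ and states $(x,x^*) \mapsto (Jx, x^*\circ J^{-1})$, preserving both norms and numerical radii. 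Hence $n(X_\cU) = n(\prod_\cU X_n)$. Corollary~\ref{cor:index_upper} then gives $n(\prod_\cU X_n) \le \lim_\cU n(X_n)$, and (i) follows.

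One point needs checking here. For $\prod_\cU X_n$ to literally sit inside $Y_\cU$, the natural map $\prod_\cU X_n \to Y_\cU$ must be well defined and isometric. It is, because the norm on each $X_n$ is the restriction of the norm of $Y$, and the $c_{0,\cU}$ ideals match. Likewise $X_\cU$ embeds isometrically in $Y_\cU$. So the equality in Remark~\ref{rem:gap_identification} is an equality of subspaces of one Banach space, and the isometry claim is immediate.

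For part (ii), assume $n(X_\cU) = n(X)$ for every free ultrafilter $\cU$. Set $L = \liminf_{n\to\infty} n(X_n)$, and choose a subsequence $n_k$ with $n(X_{n_k}) \to L$. Take a free ultrafilter $\cU$ on $\N$ containing the set $\{n_k : k \in \N\}$; one exists because this set is infinite, so its cofinite subsets together with it have the finite intersection property. Then $\lim_\cU n(X_n) = L$, since the values along $\cU$-large sets accumulate only at the limit of $n(X_{n_k})$. The numerical indices lie in $[0,1]$, so all ultralimits exist. Part (i) then yields
\[
n(X) = n(X_\cU) \le \lim_\cU n(X_n) = L = \liminf_{n \to \infty} n(X_n).
\]

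The main obstacle is not computational but justifying the identification and its isometric nature carefully, which part (i) rests on. A second subtlety is that in (ii) the hypothesis is used only for one specially chosen ultrafilter, so the argument must make clear that the ultrafilter realizing the $\liminf$ exists. Everything else is a direct application of the results already stated.
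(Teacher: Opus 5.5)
Your proposal is correct and follows the paper's proof: part (i) is exactly Remark~\ref{rem:gap_identification} combined with Corollary~\ref{cor:index_upper}, where you additionally spell out why the identification inside $Y_\cU$ is isometric and why $n(\cdot)$ is an isometric invariant, and part (ii) chooses a subsequence realizing the $\liminf$ together with a free ultrafilter concentrated on it. Because your ultrafilter containing $\{n_k : k\in\N\}$ is free, it contains every tail $\{n_k : k\ge K\}$. That is what justifies $\lim_\cU n(X_n)=L$, and it is the same property the paper obtains by putting the tails into the ultrafilter directly.
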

\begin{proof}
Part~(i) follows from Remark~\ref{rem:gap_identification} and Corollary~\ref{cor:index_upper}. For part~(ii), let $\{X_{k_j}\}$ be a subsequence with $n(X_{k_j}) \to \liminf_{n}n(X_n)$. The family $\{k_j: j \ge N\}_{N \in \N}$ has finite intersection property, hence by the ultrafilter lemma, it extends to a free ultrafilter $\cU$ on $\N$ containing each set $\{k_j: j \ge N\}$. Since $X_n \to X$ in gap topology, we have from part~(i),
\[n(X)=n(X_{\cU}) \le \lim_\cU n(X_n)=\lim_{j}n(X_{k_j})=\liminf_{n \to \infty}n(X_n).\]
\end{proof}
 
Theorem~\ref{thm:reduction} reduces the gap topology convergence problem to the following fundamental question about the stability of the numerical index under ultrapowers.
 
\begin{question}\label{q:ultrapower}
Does $n(X_\cU) = n(X)$ hold for every Banach space $X$ and every free ultrafilter $\cU$?
\end{question}
 
An affirmative answer to Question~\ref{q:ultrapower} would immediately imply, via Theorem~\ref{thm:reduction}, that the numerical index is \emph{lower semicontinuous} under the gap topology convergence, that is, $n(X) \le \liminf_{n} n(X_n)$ whenever $X_n \to X$ in gap topology. Full continuity would require an upper bound $\limsup_{n}n(X_n) \le n(X)$, which is a separate question. We conclude by recording the cases where Question~\ref{q:ultrapower} is known to have an affirmative answer and discussing why the general case remains delicate.
 
\begin{remark}\label{rem:ultrapower_cases}
Question~\ref{q:ultrapower} has an affirmative answer in the following cases:
\begin{enumerate}
    \item When $X$ is lush, since lushness is preserved by ultraproducts~\cite[Corollary~4.5]{BKMM2009}, so $n(X_\cU) = n(X) = 1$.
    \item When $n(X) = 0$: Corollary~\ref{cor:ultrapower} gives $0 \le n(X_{\cU})\le n(X)=0$, and hence $n(X_{\cU})=0$.
\end{enumerate}
The general case remains open. We also note that the numerical index is not a local property: the inequality $n(X) > n(X^*)$ can occur~\cite{BKMW2007}. This suggests that Question~\ref{q:ultrapower} may have a subtle answer in general, as the ultrapower construction can interact nontrivially with duality and local structure.
\end{remark}

\bibliography{name}

@article{BKMM2009,
  author  = {Boyko, K. and Kadets, V. and Mart{\'\i}n, M. and Mer{\'\i}, J.},
  title   = {Properties of lush spaces and applications to {B}anach spaces with numerical index~$1$},
  journal = {Studia Math.},
  volume  = {190},
  year    = {2009},
  pages   = {117--133},
}

@article{BKMW2007,
  author  = {Boyko, K. and Kadets, V. and Mart{\'\i}n, M. and Werner, D.},
  title   = {Numerical index of {B}anach spaces and duality},
  journal = {Math. Proc. Cambridge Philos. Soc.},
  volume  = {142},
  year    = {2007},
  pages   = {93--102},
}

@article{Bollob1970,
  author  = {Bollob{\'a}s, B.},
  title   = {An extension to the theorem of {B}ishop and {P}helps},
  journal = {Bull. London Math. Soc.},
  volume  = {2},
  year    = {1970},
  pages   = {181--182},
}

@book{BonsallDuncan1971,
  author    = {Bonsall, F.F. and Duncan, J.},
  title     = {Numerical Ranges of Operators on Normed Spaces and of Elements of Normed Algebras},
  series    = {London Math. Soc. Lecture Note Ser.},
  volume    = {2},
  publisher = {Cambridge Univ. Press},
  year      = {1971},
}

@book{BonsallDuncan1973,
  author    = {Bonsall, F.F. and Duncan, J.},
  title     = {Numerical Ranges {II}},
  series    = {London Math. Soc. Lecture Note Ser.},
  volume    = {10},
  publisher = {Cambridge Univ. Press},
  year      = {1973},
}

@article{heinrich1980,
  author  = {Heinrich, S.},
  title   = {Ultraproducts in {B}anach space theory},
  journal = {J. Reine Angew. Math.},
  volume  = {313},
  year    = {1980},
  pages   = {72--104},
}

@article{martin2011numerical,
  author  = {Mart{\'\i}n, M. and Mer{\'\i}, J. and Pay{\'a}, R.},
  title   = {Numerical index of absolute sums of {B}anach spaces},
  journal = {J. Math. Anal. Appl.},
  volume  = {375},
  year    = {2011},
  pages   = {207--222},
}

@article{aksoy2013limit,
  author  = {Aksoy, A.G. and Lewicki, G.},
  title   = {Limit theorems for the numerical index},
  journal = {J. Math. Anal. Appl.},
  volume  = {398},
  year    = {2013},
  pages   = {296--302},
}

@article{ostrovskii1994topologies,
  author  = {Ostrovskii, M.I.},
  title   = {Topologies on the set of all subspaces of a {B}anach space and related questions of {B}anach space geometry},
  journal = {Quaestiones Math.},
  volume  = {17},
  year    = {1994},
  pages   = {259--319},
}

@article{HIspaces,
  author  = {Gowers, W.T. and Maurey, B.},
  title   = {The unconditional basic sequence problem},
  journal = {J. Amer. Math. Soc.},
  volume  = {6},
  year    = {1993},
  pages   = {851--874},
}

@book{LindenstraussTzafriri1977,
  author    = {Lindenstrauss, J. and Tzafriri, L.},
  title     = {Classical {B}anach Spaces {I}: Sequence Spaces},
  publisher = {Springer-Verlag},
  address   = {Berlin-Heidelberg-New York},
  year      = {1977},
}

@article{dantas2022bishopold,
  title={The Bishop--Phelps--Bollob{\'a}s theorem: an overview},
  author={Dantas, Sheldon and Garc{\'\i}a, Domingo and Maestre, Manuel and Rold{\'a}n, {\'O}scar},
  journal={Operator and Norm Inequalities and Related Topics},
  pages={519--576},
  year={2022},
  publisher={Springer}
}

@article{gurariigeometric,
  author  = {Gurarii, V.I.},
  title   = {On openings and inclinations of subspaces of {B}anach spaces},
  journal = {Teor. Funkci{\u\i} Funkcional. Anal. i Prilo\v{z}en.},
  volume  = {1},
  year    = {1965},
  pages   = {194--204},
}
\bibliographystyle{amsalpha}
\end{document}